\begin{document}
\author{\textbf{Denis~I.~Saveliev}}
\title{\textbf{{\LARGE Common idempotents in compact left topological left semirings}}} 
\date{24 January 2010, Muscat\\ ICAA \vfil }
\maketitle

\theoremstyle{plain}
\newtheorem{thm}{Theorem}
\newtheorem*{prb}{Problem}
\newtheorem*{q}{Question}
\newtheorem{coro}{Corollary}
\newtheorem*{tm}{Theorem}
\newtheorem*{pr}{Proposition}
\newtheorem*{cm}{Claim}
\newtheorem*{cor}{Corollary}
\newtheorem*{cnj}{Conjecture}
\newtheorem*{lm}{Lemma}
\newtheorem*{fct}{Fact}
\newtheorem*{fcts}{Facts}

\theoremstyle{definition}
\newtheorem*{df}{Definition}
\newtheorem*{rfr}{References}
\newtheorem*{empt}{}

\theoremstyle{remark}
\newtheorem*{rmk}{Remark}
\newtheorem*{exm}{Example}
\newtheorem*{exms}{Examples}

\newcommand{\uhr}{\!\upharpoonright\!}

\newcommand{\germ}{\mathfrak}

\newcommand{\scc}{\mathop {\text{\it {\ss}}\!\!\;}\nolimits }

\newcommand{\smf}{\mathop {^\smallfrown}\nolimits }
\newcommand{\eqs}{ {\;=^{\!*}\,} }
\newcommand{\lesss}{ {\;<^{\!*}\,} }
\newcommand{\leqs}{ {\;\le^{\!*}\,} }
\newcommand{\image}{\/``\,}
\newcommand{\wo}{\mathop {\mathrm {wo\,}}\nolimits }
\newcommand{\pwo}{\mathop {\mathrm {pwo\,}}\nolimits }
\newcommand{\wf}{\mathop {\mathrm {wf\,}}\nolimits }
\newcommand{\ewf}{\mathop {\mathrm {ewf\,}}\nolimits }
\newcommand{\cf}{ {\mathop{\mathrm {cf\,}}\nolimits} }
\newcommand{\hcf}{ {\mathop{\mathrm {hercf\,}}\nolimits} }
\newcommand{\cl}{ {\mathop{\mathrm {cl\,}}\nolimits} }
\newcommand{\tc}{ {\mathop{\mathrm {tc\,}}\nolimits} }
\newcommand{\dom}{ {\mathop{\mathrm {dom\,}}\nolimits} }
\newcommand{\ran}{ {\mathop{\mathrm {ran\,}}\nolimits} }
\newcommand{\fld}{ {\mathop{\mathrm {fld\,}}\nolimits} }
\newcommand{\ot}{ {\mathop{\mathrm {ot\,}}\nolimits} }
\newcommand{\otp}{ {\mathop{\mathrm {otp\,}}\nolimits} }
\newcommand{\lh}{ {\mathop{\mathrm {lh\,}}\nolimits} }
\newcommand{\ext}{\mathrm{ext}}
\newcommand{\rk}{ {\mathrm{rk}} }
\newcommand{\A}{ {\mathrm A} }
\newcommand{\Z}{ {\mathrm Z} }
\newcommand{\ZF}{ {\mathrm {ZF}} }
\newcommand{\ZFA}{ {\mathrm {ZFA}} }
\newcommand{\ZFC}{ {\mathrm {ZFC}} }
\newcommand{\E}{ {\mathrm {AE}} }
\newcommand{\AR}{ {\mathrm {AR}} }
\newcommand{\GR}{ {\mathrm {GR}} }
\newcommand{\WR}{ {\mathrm {WR}} }
\newcommand{\WO}{ {\mathrm {WO}} }
\newcommand{\GWO}{ {\mathrm{GWO}} }
\newcommand{\LO}{ {\mathrm {LO}} }
\newcommand{\AF}{ {\mathrm {AF}} }
\newcommand{\AC}{ {\mathrm {AC}} }
\newcommand{\DC}{ {\mathrm {DC}} }
\newcommand{\GC}{ {\mathrm {GC}} }
\newcommand{\ACM}{ {\mathrm {ACM}} }
\newcommand{\GCM}{ {\mathrm {GCM}} }
\newcommand{\AD}{ {\mathrm {AD}} }
\newcommand{\AInf}{ {\mathrm {AInf}} }
\newcommand{\AU}{ {\mathrm {AU}} }
\newcommand{\AP}{ {\mathrm {AP}} }
\newcommand{\CH}{ {\mathrm {CH}} }
\newcommand{\APr}{ {\mathrm {APr}} }
\newcommand{\ASp}{ {\mathrm {ASp}} }
\newcommand{\ARp}{ {\mathrm {ARp}} }
\newcommand{\AFA}{ {\mathrm {AFA}} }
\newcommand{\BAFA}{ {\mathrm {BAFA}} }
\newcommand{\FAFA}{ {\mathrm {FAFA}} }
\newcommand{\SAFA}{ {\mathrm {SAFA}} }
\newcommand{\NS}{ {\mathrm{NS}} }
\newcommand{\Cov}{ {\mathrm{Cov}} }
\newcommand{\Con}{ {\mathrm{Con}} }
\newcommand{\PA}{ {\mathrm{PA}} }
\newcommand{\BF}{ {\mathrm{BF}} }
\newcommand{\RP}{ {\mathrm{RP}} }
\newcommand{\RC}{ {\mathrm{RC}} }
\newcommand{\RE}{ {\mathrm{RE}} }
\newcommand{\J}{ {\mathrm {J}} }
\newcommand{\Ja}{ {\mathrm {Ja}} }
\newcommand{\Jb}{ {\mathrm {Jb}} }
\newcommand{\Jc}{ {\mathrm {Jc}} }
\newcommand{\Sc}{ {\mathrm {Sc}} }
\newcommand{\wSc}{ {\mathrm {wSc}} }
\newcommand{\Sk}{ {\mathrm {Sk}} }
\newcommand{\GSc}{ {\mathrm {GSc}} }
\newcommand{\GwSc}{ {\mathrm {GwSc}} }
\newcommand{\GSk}{ {\mathrm {GSk}} }
\newcommand{\T}{ {\mathrm {T}} }
\newcommand{\TST}{ {\mathrm {TST}} }
\newcommand{\PI}{ {\mathrm {PI}} }
\newcommand{\LF}{ {\mathrm {LF}} }
\newcommand{\LR}{ {\mathrm {LR}} }
\newcommand{\RK}{ {\mathrm{RK}} }

\newpage

A~classical result of topological algebra states that
any compact left topological semigroup has an idempotent.
This result, in its final form due to Ellis, became crucial 
for numerous applications 
in number theory, algebra, topological dynamics, and ergodic theory.

In this talk, I~consider more complex structures than semigroups: 
{\it left semirings\/}, the algebras with two associative operations 
one of which is also left distributive w.r.t. another one.
By Ellis' result, compact left topological left semirings 
have additive idempotents as well as multiplicative ones.
I~show that they have, moreover, {\it common\/}, i.e., additive and 
multiplicative simultaneously, idempotents.

As an application, I~partially answer a~question related to
algebraic properties of the Stone--\v{C}ech compactification 
of natural numbers. 
Finally, I show that similar arguments establish the existence of 
common idempotents in far more general structures than 
left semirings.

\newpage

\newpage

{\centerline{\Large{\bf One operation:\/}}} 

\noindent
{\centerline{\Large{\bf Ellis' result and applications\/}}}

\newpage

Here I~recall Ellis' theorem on semigroups and discuss 
its importance for various applications.

\newpage

{\it Terminology.\/}\quad
A~{\it groupoid\/} is a~set with a~binary operation.
The usual notation is $(X,\cdot)$ or its variants, like $(X,+)$, or simply~$X$.
In the multiplicative notation, the symbol~$\cdot$ is often omitted.
When the operation satisfies the {\it associativity\/} law
$$(xy)z=x(yz),$$
the groupoid is called a~{\it semigroup\/}.

A~groupoid is {\it left topological\/} iff for any fixed first argument~$a$
the left translation $$x\mapsto ax$$ is continuous.
{\it Right topological\/} groupoids are defined dually.
A~groupoid is {\it semitopological\/} iff it is left and right topological simultaneously,
and {\it topological\/} iff its operation is continuous.
It is not difficult to verify that this hierarchy is not degenerate, even for semigroups.

We shall be interested in left topological groupoids.

\newpage

R.~Ellis in his ``Lectures on topological dynamics" (1969)
published the following simple but remarkable result
(see~[1]):


\noindent 
\begin{tm}[Ellis]\quad 
Every compact Hausdorff left topological semigroup has an idempotent.
\end{tm}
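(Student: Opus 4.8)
The plan is to extract the idempotent from a \emph{minimal} closed subsemigroup, produced by Zorn's lemma. Writing $S$ for the given semigroup, I would first consider the collection $\mathcal{F}$ of all nonempty closed subsemigroups of $S$, partially ordered by reverse inclusion; it is nonempty since $S\in\mathcal{F}$. To apply Zorn's lemma I must check that every chain has an upper bound in $(\mathcal{F},\supseteq)$, which amounts to showing that the intersection of a chain lies in $\mathcal{F}$. An intersection of subsemigroups is again a closed subsemigroup, so the only real content is that this intersection is nonempty: a chain is a family of nonempty closed sets whose finite subfamilies have nonempty intersection (the smallest member), so by the finite intersection property in the compact space $S$ the full intersection is nonempty. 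This is the first essential use of compactness. Zorn's lemma then yields a minimal element $T\in\mathcal{F}$.

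Next I would fix an arbitrary $a\in T$ and manufacture smaller closed subsemigroups inside $T$, forcing them to collapse onto $T$ by minimality. Consider $aT=\{ax:x\in T\}$. It is contained in $T$ because $T$ is a subsemigroup, and it is itself a subsemigroup: for $x,y\in T$ associativity gives $(ax)(ay)=a\bigl(x(ay)\bigr)$ with $x(ay)\in T$. Crucially, $aT=\lambda_a[T]$ is the image of the compact set $T$ under the continuous left translation $\lambda_a\colon x\mapsto ax$, hence compact and therefore closed in the Hausdorff space $S$. Thus $aT$ is a nonempty closed subsemigroup of $T$, so minimality forces $aT=T$; in particular $a\in aT$, i.e.\ $ax=a$ for some $x\in T$.

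Finally I would isolate the idempotent via the set $B=\{x\in T:ax=a\}$. It is nonempty by the previous step; it is closed since $B=T\cap\lambda_a^{-1}(\{a\})$, with $\lambda_a$ continuous and $\{a\}$ closed; and it is a subsemigroup, because $x,y\in B$ gives $a(xy)=(ax)y=ay=a$. Hence $B$ is once more a nonempty closed subsemigroup of $T$, and minimality yields $B=T$. Since $a\in T=B$, this means exactly $aa=a$, so $a$ is the desired idempotent. The main point to watch throughout is that only left translations $\lambda_a$ are assumed continuous, while right translations need not be; every topological step (the closedness of $aT$ and of $B$) must therefore be arranged to use $\lambda_a$ alone, and the two invocations of compactness — in the Zorn step and in the closedness of $aT$ — are the load-bearing hypotheses.
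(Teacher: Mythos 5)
Your proof is correct and is exactly the standard Ellis argument; the paper itself only cites this theorem to [1], but its own proof of the left-semiring generalization follows precisely the same template (Zorn's lemma on compact subalgebras via the finite intersection property, then collapsing $aT$ and $\{x: ax=a\}$ by minimality). You correctly flag the two places where compactness and the Hausdorff property carry the load, and every topological step indeed uses only the left translation $\lambda_a$, so nothing is missing.
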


(Earlier A.~Wallace and K.~Numakura stated independently this result 
for {\it topological\/} semigroups. However just one sided continuity 
is important for applications, as we'll see later.)

In particular, {\it any minimal compact left topological semigroup
consists of a unique element\/}.
Note that this statement has a (trivial) {\it purely algebraic\/} counterpart:
{\it Any minimal semigroup consists of a unique element\/}.
Here a groupoid is {\it minimal\/} iff it includes no proper subgroupoids,
and {\it minimal compact\/} iff it is compact and includes no proper compact subgroupoids.

\newpage

This result, interesting in itself, became crucial for various applications 
(of Ramsey-theoretic character) in number theory, algebra, topological dynamics, and ergodic theory.
These applications are based on using of {\it idempotent ultrafilters\/}.
(For a~general reference, see~[2].)
Perhaps, the simplest illustration of such applications is 
Hindman's famous {\it Finite Sums Theorem\/} (1974):

\noindent
\begin{tm}[Hindman]\quad
Any finite partition of~$\mathbb N$, the set of natural numbers,
has a~part that is ``big" in the following sense: it contains an infinite 
sequence all finite sums of distinct elements of which belong to this part.
\end{tm}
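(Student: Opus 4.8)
The plan is to translate the combinatorial statement into an algebraic one inside $\beta\mathbb{N}$, the space of ultrafilters on $\mathbb{N}$, i.e.\ the Stone--\v{C}ech compactification of the discrete space $\mathbb{N}$. For $A\subseteq\mathbb{N}$ and $x\in\mathbb{N}$ put $-x+A=\{y:x+y\in A\}$, and extend addition to $\beta\mathbb{N}$ by
$$A\in p+q\iff\{x:-x+A\in p\}\in q.$$
First I would check that this operation is associative, extends ordinary addition, and, with the topology whose basic clopen sets are $\{r:A\in r\}$, makes $\beta\mathbb{N}$ a compact Hausdorff left topological semigroup. Ellis' theorem then provides an idempotent ultrafilter $p=p+p$. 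Since $p$ is an ultrafilter, for any finite partition of $\mathbb{N}$ exactly one part $A$ satisfies $A\in p$; this part will be the ``big'' one.

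The combinatorial core is a lemma describing how membership in $p$ propagates under the partial translations. Writing $A^\star=\{x\in A:-x+A\in p\}$, I would establish two facts: (i) if $A\in p$ then $A^\star\in p$; and (ii) if $x\in A^\star$ then $-x+A^\star\in p$. Fact (i) is read off directly from idempotency, which says precisely that $A\in p$ iff $\{x:-x+A\in p\}\in p$, so $A^\star$ is the intersection of two members of $p$. Fact (ii) uses idempotency a second time: since $-(x+y)+A=-y+(-x+A)$, the set $\{y:x+y\in A^\star\}$ is the intersection of $-x+A$ (which is in $p$ because $x\in A^\star$) with $\{y:-y+(-x+A)\in p\}$ (which is in $p$ by idempotency applied to the member $-x+A$).

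With the lemma in hand I would build the required sequence by recursion. Choose $x_1\in A^\star$. Having found $x_1<\dots<x_n$ with every sum $\sum_{i\in s}x_i$ over nonempty $s\subseteq\{1,\dots,n\}$ lying in $A^\star$, I intersect $A^\star$ with all the finitely many sets $-\bigl(\sum_{i\in s}x_i\bigr)+A^\star$; by fact (ii) each lies in $p$, so their intersection lies in $p$, and as $p$ is nonprincipal (no natural number is an additive idempotent) I may pick $x_{n+1}$ in it with $x_{n+1}>x_n$. Then every sum over a nonempty subset of $\{1,\dots,n+1\}$ again belongs to $A^\star\subseteq A$, and the resulting infinite sequence witnesses the theorem.

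The step I expect to be the real obstacle is fact (ii): it is exactly here that the self-referential identity $p=p+p$ must be exploited a second time, and the bookkeeping with the partial-translation notation $-x+A$, together with the identity $-(x+y)+A=-y+(-x+A)$ in which associativity of $+$ is silently used, is where an error would most easily hide. By contrast, the left topological semigroup structure on $\beta\mathbb{N}$ and the final recursion are routine once Ellis' theorem has been invoked.
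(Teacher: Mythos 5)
Your proposal is correct and follows exactly the route the paper takes (the Galvin--Glazer argument): extend addition to the space of ultrafilters, observe it is a compact Hausdorff left topological semigroup, invoke Ellis' theorem to obtain an idempotent ultrafilter, and reduce the combinatorial statement to the lemma that members of an idempotent ultrafilter are ``big''. The paper states that lemma without proof (``The proof of this lemma is not hard''), and your $A^\star$-argument with the two applications of idempotency, followed by the recursive choice of the $x_n$, is precisely the standard way of filling in that step.
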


(Previously weaker results were given by various authors, 
including D.~Hilbert and I.~Schur).

The original purely combinatorial proof was uncredibly cumbersome.
(Recently N.~Hindman said: ``I~never understood the original complicated proof
(no, I did not plagiarize it)...".) 
Soon after this, however, F.~Galvin and S.~Glazer 
found a~way to obtain this result in a~few lines. 
To explain the idea, we need some preparations.

\newpage

{\it The space of ultrafilters.\/}\quad
Let $X$~be a~set.
Recall that the set $\scc X$ of all ultrafilters over~$X$ 
carries a natural topology with elementary (cl)open sets
$$O_A=\{u\in\scc X:A\in u\}$$ for each $A\subseteq X$.
The following facts are standard in general topology:
{\it $\scc X$ is a~compact Hausdorff extremally disconnected space. 
Moreover, it is the Stone--\v{C}ech compactification
of the discrete space~$X$.\/}

Recall that the {\it Stone--\v{C}ech compactification\/} of a~space~$X$ 
is a~compact space~$Z$ such that $X$~is dense in~$Z$, and 
for every compact space~$Y$, any continuous mapping $f:X\to Y$ 
can be extended to a~continuous mapping $\tilde f:Z\to Y$. 
Such a~$Z$ is unique up to homeomorphism with fixed points of~$X$.
It is customary to identify elements of~$X$ with principal ultrafilters, 
while non-principal ultrafilters form the {\it remainder\/}: 
$$X^*=\scc X\setminus X.$$
Under this identification, any unary operation on a~discrete space~$X$
can be extended to an operation on the space of ultrafilters over~$X$.

\newpage

{\it The algebra of ultrafilters.\/}\quad
Consider now extensions of binary operations. 
(In principle, the argument works for arbitrary operations.)

Let $(X,\cdot)$ be a~groupoid.
To extend $\cdot$ on~$X$ to $\cdot$ on~$\scc X$ 
(which let me denote by the same symbol), we proceed as follows:
First we extend $\cdot$ by fixing each second argument,
then by fixing each first argument.

We can define this extension in a~straightforward way by putting
$$
uv=
\{S\subseteq X:
\{a\in X:
\{b\in X:
ab\in S
\}\in u
\}\in v
\}
$$
for all $u,v\in\scc X$.
This looks slightly complicated but has the same clear meaning.

As a~result, the extended operation is continuous for any fixed first argument, 
i.e. the groupoid $(\scc X,\cdot)$ is {\it left\/} topological.
And although it is never {\it right\/} topological, 
except for trivial cases, the operation is continuous 
for any fixed second argument {\it whenever it is in~$X$\/}.
Moreover, it is a~unique operation with these properties.

\newpage

Not many algebraic properties are stable under this extension.

E.g. consider the extension of $(\mathbb N,+,\cdot)$, 
the semiring of natural numbers with the usual addition and multiplication. 
In $(\scc\mathbb N,+,\cdot)$, none of the extended operations is commutative.
Also both distributivity laws fail (I'll return to this example below).

However, the associativity law {\it is\/} stable:

\noindent
\begin{lm}\quad
If $X$~is a~semigroup, so is~$\scc X$.
\end{lm}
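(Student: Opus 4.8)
The plan is to prove the only nontrivial part of the statement, namely that the extended operation on $\scc X$ is \emph{associative}; the fact that $uv$ is again an ultrafilter when $u,v\in\scc X$ has already been recorded in the construction above, so it may be taken for granted. Accordingly I fix $u,v,w\in\scc X$ and aim to show $(uv)w=u(vw)$ as ultrafilters, that is, to show that an arbitrary $S\subseteq X$ lies in the left-hand side iff it lies in the right-hand side. First I would restate the defining formula in the compact shape: for all $p,q\in\scc X$,
$$S\in pq\iff\{x:\{y:xy\in S\}\in p\}\in q,$$
which is exactly the displayed definition with $p,q$ written for $u,v$.

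Then I would unfold each of $(uv)w$ and $u(vw)$ by applying this equivalence twice, once for the outer product and once for the inner one. Expanding the left-hand side with $p=uv$, $q=w$ and then resolving the inner membership in $uv$ (now with $p=u$, $q=v$) yields the three-layered condition
$$S\in(uv)w\iff\Big\{s:\{t:\{y:s(ty)\in S\}\in u\}\in v\Big\}\in w,$$
whereas expanding the right-hand side with $p=u$, $q=vw$ and then resolving membership in $vw$ yields
$$S\in u(vw)\iff\Big\{s:\{t:\{y:(st)y\in S\}\in u\}\in v\Big\}\in w.$$

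The two right-hand sides now differ only in the innermost sets $\{y:s(ty)\in S\}$ and $\{y:(st)y\in S\}$, and this is precisely where associativity in $X$ enters, decisively: since $s(ty)=(st)y$ for all $s,t,y\in X$, these two subsets of $X$ are literally equal. Consequently the set of $t$ for which the inner set belongs to $u$ is the same on both lines, hence the set of $s$ for which that set belongs to $v$ is the same, hence the two outer conditions on membership in $w$ are equivalent. This gives $(uv)w=u(vw)$ and completes the argument.

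I do not expect a genuine obstacle: the proof is a matching of nested conditions term by term, with the pointwise identity $s(ty)=(st)y$ doing all the work at the deepest level. The only point requiring real care is the bookkeeping in the double unfolding, since in the formula the first factor governs the rightmost element of the product while the second factor governs the leftmost; attaching each quantified variable to the correct one of $u,v,w$ is where one can slip. An alternative, slightly slicker route would avoid this bookkeeping by invoking the continuity properties recorded above together with the density of $X$ in $\scc X$, but the direct computation is self-contained and exhibits exactly why associativity, and nothing beyond it, is what survives the extension.
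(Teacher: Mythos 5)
Your proof is correct. The paper states this lemma without proof (it is a slide deck), and your direct double unfolding of the definition is the standard argument; in particular, you have correctly tracked the paper's convention that in $S\in pq\iff\{x:\{y:xy\in S\}\in p\}\in q$ the \emph{first} factor governs the inner (right-hand) variable of the product, which is exactly where such computations usually go astray. Both nested expansions and the final appeal to $s(ty)=(st)y$ check out, so nothing further is needed.
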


So any semigroup~$X$ extends to the compact left topological 
semigroup~$\scc X$ of ultrafilters over~$X$.
By Ellis' theorem, the latter has an idempotent.

Of course, it is in~$X^*$ if $X$~has no idempotents.
Another case when one can get an idempotent in~$X^*$ is 
when $X$~is (weakly) cancellative; in this case, 
$X^*$~forms a~subsemigroup of~$\scc X$, and 
as $X^*$~is closed in~$\scc X$ and so compact,
one can apply Ellis' theorem to $X^*$.

\newpage

After this preparations, the Finite Sums Theorem 
is reduced to the following fact:

\noindent
\begin{lm}\quad
Idempotents of $(\mathbb N^*,+)$ consist just of sets 
``big" in sense of that theorem.
\end{lm}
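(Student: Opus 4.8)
The plan is to establish the characterisation in both directions, namely that the members of the additive idempotents of $(\mathbb{N}^*,+)$ are precisely the big sets. The essential direction is that every member of an idempotent is big, since this is exactly what turns Hindman's theorem into a corollary of Ellis' theorem: one applies Ellis to the compact subsemigroup $\mathbb{N}^*$ of $\scc\mathbb{N}$, fixes an idempotent $u=u+u$, and observes that, among the parts of a finite partition of~$\mathbb{N}$, the unique part belonging to~$u$ is then forced to be big. So fix such a~$u\in\mathbb{N}^*$ and a~set $A\in u$, and for $a\in\mathbb{N}$ write $A_a=\{b\in\mathbb{N}:a+b\in A\}$. First I~would unfold $A\in u+u$ by the displayed definition of the extended operation: it asserts precisely that $\{a:A_a\in u\}\in u$. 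Intersecting with $A\in u$, the set $A^\star=\{a\in A:A_a\in u\}$ lies in~$u$.

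The key step, and the one I~expect to be the main obstacle, is the closure property: for every $a\in A^\star$ one has $\{b:a+b\in A^\star\}\in u$. Here associativity, i.e.\ the preceding Lemma, is indispensable. Since $a+b\in A^\star$ means $a+b\in A$ together with $A_{a+b}\in u$, and since associativity yields $A_{a+b}=(A_a)_b$, the set $\{b:a+b\in A^\star\}$ is exactly $\{b\in A_a:(A_a)_b\in u\}$. But $A_a\in u$ because $a\in A^\star$, so applying the previous paragraph to $B:=A_a$ in place of~$A$ identifies this set as $B^\star\in u$. Thus idempotence, re-used one level down, delivers the closure property.

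With $A^\star\in u$ and this property in hand, the sequence is produced by a~routine induction maintaining the invariant that every sum $s_F=\sum_{i\in F}x_i$, over nonempty $F\subseteq\{1,\dots,n\}$, lies in~$A^\star$. At stage~$n$ the set
\[
E_n=A^\star\cap\bigcap_{\emptyset\neq F\subseteq\{1,\dots,n\}}\{b:s_F+b\in A^\star\}
\]
is a~finite intersection of members of~$u$, hence in~$u$ and therefore infinite, as $u$~is nonprincipal; so one picks $x_{n+1}\in E_n$ with $x_{n+1}>x_n$, and the defining clauses of~$E_n$ are exactly what carries the invariant to stage $n+1$. Consequently every finite sum $s_F\in A^\star\subseteq A$, the $x_n$ are distinct, and $A$~is big.

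For the converse I~would show that a~big set $A$, containing all finite sums $s_F$ of some increasing sequence $\langle x_n\rangle$, belongs to some idempotent. Put $S_m=\{s_F:\emptyset\neq F\subseteq\mathbb{N}\text{ finite},\ \min F\ge m\}$ and $T=\bigcap_m\bigl(O_{S_m}\cap\mathbb{N}^*\bigr)$. This $T$ is a~nonempty compact subset of~$\mathbb{N}^*$, and a~short check shows it closed under addition (for $a=s_G\in S_m$, adding any $s_H$ with $\min H>\max G$ keeps the sum in~$S_m$, so $\{b:a+b\in S_m\}\supseteq S_{\max G+1}$ lies in any $v\in T$), hence $T$~is a~compact left topological subsemigroup. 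Ellis' theorem yields an idempotent $u\in T$, and since $T\subseteq O_{S_1}\subseteq O_A$ we conclude $A\in u$. The only nonroutine point here is this closure of~$T$ under~$+$.
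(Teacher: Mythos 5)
The paper itself omits the proof (it only remarks that it ``is not hard''), and your argument is exactly the standard Galvin--Glazer argument that the paper is alluding to: the $A^\star$ construction with the inductive choice of the sequence for one direction, and the compact subsemigroup $\bigcap_m O_{S_m}$ plus Ellis' theorem for the other. Both directions check out, including the unfolding of $A\in u+u$ and the use of associativity via $A_{a+b}=(A_a)_b$, so the proposal is correct and consistent with the paper's intended approach.
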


The proof of this lemma is not hard.

Actually, the lemma  is true not only for $(\mathbb N,+)$ but for 
any groupoid $(X,\cdot)$ {\it whenever $(X^*,\cdot)$ has idempotents\/}.
But to establish their existence we use associativity and apply Ellis' theorem.

On the other hand, no specific properties of semigroups are used,
thus the argument works for {\it any\/} semigroup. 
E.g. one can apply it to $(\mathbb N,\cdot)$ and prove 
the {\it Finite Products Theorem\/} (formulated analogously).

\newpage

Hindman's theorem, even in its general form, i.e. for 
every semigroups, is the {\it simplest\/} illustration here~--- since 
its proof uses {\it arbitrary\/} idempotent ultrafilters.
By using idempotent ultrafilters {\it of specific form\/}, 
one can prove a~lot of 
other significant results.
Most popular examples include:
\\
$$ 
\begin{array}{ll}
\;\text{van der Waerden's and Szemer\'edi's}
\\
\;\text{Arithmetic Progressions Theorem,}
\\
\\
\;\text{Furstenberg's Common Recurrence Theorem,} 
\\
\\
\;\text{Hales--Jewett's theorem,} 
\end{array}
$$
\\
etc.
Some of these results can be proved elementarily
but using of ultrafilters much simplifies this;
for other results, no elementary proofs are known.


\newpage
{\centerline{\Large{\bf Two operations:\/}}}
\noindent
{\centerline{\Large{\bf Left semirings\/}}}

\newpage

Here I~establish the main result of this talk,
the existence of common idempotents in compact left topological left semirings.
Also I~give some its application and consider its algebraic version.

\newpage

{\it Terminology.\/}\quad
An {\it algebra\/} means here a~universal algebra.
I~shall consider algebras with two binary operations.
Let me denote such an algebra by $(X,+,\cdot)$, or simply~$X$ again,
and refer to its operations as its {\it addition\/} and {\it multilication\/}.
(However, no their properties, like commutativity, associativity, etc.,
are assumed {\it a~priori\/}.)

Given an algebra $(X,+,\cdot)$, recall that
the multiplication is {\it left distributive w.r.t.\/}~the addition
iff the algebra satisfies the law
$$
x(y+z)=xy+xz.
$$
An algebra $(X,+,\cdot)$ is a~{\it left semiring\/} iff 
both its groupoids are semigroups and $\cdot$~is left distributive w.r.t.~$+$.
{\it Right distributivity\/} and {\it right semirings\/} have the dual definitions;
{\it semirings\/} are algebras which are left and right semirings simultaneously.

An algebra with binary operations is {\it left topological\/} 
iff any of its groupoids is left topological.

\newpage

Now we are ready to establish our main result:

\noindent
\begin{tm}\quad
Any compact Hausdorff left topological left semiring has a~common idempotent 
(and so consists of a~unique element whenever is minimal compact).
\end{tm}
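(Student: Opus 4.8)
The plan is to reduce everything to the parenthetical claim, namely that a minimal compact left topological left semiring is a singleton; the main assertion then follows by passing to a minimal compact subsemiring. First I would invoke Zorn's lemma on the family of nonempty compact subsemirings of~$X$, ordered by inclusion: a chain intersects in a nonempty set by the finite intersection property for nested compact sets in a Hausdorff space, and the intersection is again closed under both operations, so a minimal nonempty compact subsemiring~$M$ exists. It suffices to show $|M|=1$, for then its unique element~$e$ satisfies $e+e=e$ and $ee=e$, i.e.\ is a common idempotent of~$X$.

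The heart of the argument is the observation that left multiplication respects the subsemiring structure. For any $a\in M$, the set $aM=\{am:m\in M\}$ is compact, being the image of the compact~$M$ under the continuous left translation $m\mapsto am$. Moreover $aM$ is a subsemiring: it is closed under addition since $am+an=a(m+n)$ by left distributivity, and closed under multiplication since $(am)(an)=a(m(an))$ by associativity of~$\cdot$. As $aM$ is then a nonempty compact subsemiring contained in~$M$, minimality forces
$$aM=M\qquad\text{for every }a\in M.$$

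Next I would feed in the additive idempotent supplied by Ellis' theorem. Applying it to the compact left topological semigroup $(M,+)$ yields $e\in M$ with $e+e=e$. Consider the ``stabilizer'' $B=\{x\in M:ex=e\}$. It is closed, being the preimage of the closed point~$\{e\}$ under the continuous map $x\mapsto ex$, and it is nonempty because $e\in M=eM$ yields an~$x$ with $ex=e$. It is closed under multiplication, since $e(xy)=(ex)y=ey=e$; and ---this is exactly where additive idempotency of~$e$ is used--- it is closed under addition, since $e(x+y)=ex+ey=e+e=e$. Thus $B$ is a nonempty compact subsemiring of~$M$, and minimality gives $B=M$, i.e.\ $ex=e$ for all $x\in M$. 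Consequently $eM=\{e\}$, while we already know $eM=M$; therefore $M=\{e\}$, as required.

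I expect the main obstacle to lie not in this final collision but in isolating \emph{which} closure properties survive the one-sided hypotheses: only left translations are continuous and only left distributivity is available, so the naive attempt to apply Ellis directly to the set of additive idempotents breaks down, since that set need not be closed (the map $x\mapsto x+x$ is not continuous). The device that circumvents this is to work inside a minimal subsemiring and to use the two auxiliary subsemirings $aM$ and~$B$, each engineered so that precisely the available one-sided laws ---left distributivity for the additive closure of~$aM$, and additive idempotency of~$e$ for the additive closure of~$B$--- make them subsemirings. Verifying these closures is routine; the only real care needed is to confirm that the compactness and continuity invoked, for images and preimages of left translations, rest solely on the left-topological hypothesis.
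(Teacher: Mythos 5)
Your proposal is correct and follows essentially the same route as the paper: reduce to a minimal compact subalgebra via Zorn's lemma, take an additive idempotent $e$ from Ellis' theorem, and use the two auxiliary subalgebras $eM$ and $\{x: ex=e\}$ (whose closure under addition rests exactly on left distributivity plus $e+e=e$) to force the minimal subalgebra to collapse to $\{e\}$. The only differences are cosmetic: the paper treats the minimal case first and Zorn second, and concludes $X=\{a\}$ by noting $a\in A$ gives $aa=a$ and invoking minimality once more, rather than via your collision $eM=M$ versus $eM=\{e\}$.
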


Here a~{\it common idempotent\/} means an element that is 
an idempotent for each operations, in other words,
an element forming a~subalgebra.

This theorem {\it generalizes\/} Ellis' result on semigroups~--- since 
any left topological semigroup can be turned  
into a~left topological semiring in a~trivial way. E.g.: 

If $(X,\cdot)$ is a~left topological semigroup, 
let $+$~be the projection onto the {\it first\/} argument,
then \\$(X,+,\cdot)$ is a~left topological semiring. Or else:

If $(X,+)$ is a~left topological semigroup, 
let $\cdot$~be the projection onto the {\it second\/} argument,
then $(X,+,\cdot)$ is a~left topological semiring, too.

\newpage

\newpage

\begin{proof}\quad 
Let $(X,+,\cdot)$ be a~compact Hausdorff left topological left semiring.

1. 
First we show that {\it if $X$~is minimal compact,
then it consists of a~unique element\/}.

Let $a\in X$.
The set $aX=\{ax:x\in X\}$ is compact 
(as the image of~$X$ under $x\mapsto ax$).
Moreover, $aX$~forms a~subalgebra.
Therefore, $aX=X$ (by minimality).

Furthermore, 
the set $A=\{x\in X:ax=a\}$ is nonempty (since $aX=X$) 
and compact (as the preimage of~$\{a\}$ under $x\mapsto ax$).
Does $A$~form a~subalgebra?
In general, {\it no\/}: $(A,\cdot)$ is a~semigroup
but $(A,+)$ should not be a~semigroup.

But assume that $a$~is an additive idempotent; 
it exists by Ellis' theorem applied to $(X,+)$.
Then $(A,+,\cdot)$~{\it is\/} a~subalgebra.
Therefore, $A=X$ (by minimality), and thus
$a$~is also a~multiplicative idempotent.
It follows $X=\{a\}$ (by minimality again).

\newpage

2.
Now we consider the general case, when $X$~is not assumed minimal compact,
and show that it has a~common idempotent. 

The intersection of any $\subseteq$-decreasing 
chain of compact subalgebras of~$X$ is a~compact subalgebra.
By Zorn's Lemma, there is a~{\it minimal compact\/} subalgebra~$A$.
By the first part of the proof, it consists of a~unique element,
which is thus a~common idempotent.
\end{proof}


\noindent
\begin{rmk}\quad
The proof shows a stronger fact: {\it Any additive idempotent of every 
minimal compact left topological left semiring is also a~multiplicative one.\/}
Simple examples show that multiplicative idempotents should not be additive, and  
additive idempotents of {\it non\/}{}minimal compact left topological left semiring 
should not be multiplicative.
\end{rmk}

\newpage

{\it Algebraic variant.\/}\quad
As we noted, Ellis' result on minimal compact semigroups has 
an obvious {\it purely algebraic\/} counterpart: 
{\it Every minimal semigroup consists of a~unique element\/}.
One can ask whether the result on minimal compact semirings 
have the similar algebraic version:

\noindent
\begin{q}\quad
Can a~minimal 
left semiring have more than one element?
\end{q}

Although the question looks not difficult, 
I~was able to get the (expected) negative answer only in partial cases:

\noindent
\begin{pr}
\noindent
\\
1. Any minimal finite left semiring consists of a~unique element.
\\
2. Any minimal semirings consists of a~unique element. 
\end{pr}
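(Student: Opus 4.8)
The plan is to reduce both parts to a single algebraic core, namely the first part of the proof of the main theorem, which never used compactness: \emph{in any minimal left semiring possessing an additive idempotent $a$, one has $aX=X$ by minimality (since $aX$ is a subalgebra), the set $A=\{x:ax=a\}$ is a nonempty subalgebra precisely because $a$ is additively idempotent, so $A=X$ by minimality, whence $a$ is multiplicatively idempotent as well and $X=\{a\}$.} Thus in each case it suffices to exhibit a single additive idempotent, and everything downstream is already established.

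For part~1 this is immediate. A finite semigroup always contains an idempotent (some power of any element is one), so the additive reduct $(X,+)$, being a finite semigroup, has an additive idempotent, and the core argument above finishes.

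For part~2 the work is to manufacture an additive idempotent using the \emph{second} distributive law. First I would note that $aX$ is a subalgebra by left distributivity and $Xa$ is a subalgebra by right distributivity, so minimality gives $aX=Xa=X$ for every $a$. Hence every equation $ax=b$ and $ya=b$ is solvable, and therefore $(X,\cdot)$ is a group, with identity $1$. By minimality the subalgebra generated by $1$ is all of $X$, and by distributivity this subalgebra is exactly the set of additive multiples $\{n\cdot 1:n\ge 1\}$, since products collapse to sums: $(n\cdot 1)(m\cdot 1)=(nm)\cdot 1$. Now $1+1=2\cdot 1$ lies in the group and so is multiplicatively invertible; its inverse is some $m\cdot 1$, which yields a relation $k\cdot 1=1$ with $k=2m\ge 2$. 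In particular the multiples $n\cdot 1$ are periodic, so $X$ is finite and part~1 applies. (Multiplying $k\cdot 1=1$ by an arbitrary $x$ even gives $k\cdot x=x$ for all $x$, so that $(k-1)\cdot 1$ is directly seen to be an additive idempotent.)

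The main obstacle is exactly the production of an additive idempotent without compactness or an a~priori finiteness hypothesis; once one is found, nothing new is needed. For part~1 finiteness hands it over, and for part~2 the leverage is the extra distributive law, which upgrades right simplicity to a group structure and forces finiteness. This also explains why the general algebraic question resists the method: with only left distributivity one obtains $aX=X$ but not $Xa=X$, so the multiplicative reduct need not be a group and the key relation $k\cdot 1=1$ need not appear.
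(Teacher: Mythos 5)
Your proposal is correct. For Clause~1 you are essentially reproducing the paper's route with the topology stripped out: the paper invokes the main Theorem under the discrete topology, and Ellis' theorem for a finite discrete semigroup is precisely your observation that a finite semigroup contains an idempotent; the ``algebraic core'' you isolate is exactly the first part of the paper's proof of the Theorem, which indeed uses compactness only to produce the additive idempotent and to guarantee that $aX$ and $A=\{x:ax=a\}$ are compact --- in the purely algebraic minimal case these sets are subalgebras outright, so minimality alone finishes. For Clause~2 the paper says only that ``the proof uses different arguments'' and supplies none, so your derivation is a genuine self-contained argument rather than a comparison target: the chain checks out at every step --- $aX$ is a subalgebra by left distributivity and associativity, $Xa$ by right distributivity and associativity, minimality gives $aX=Xa=X$, the classical criterion (a semigroup in which $ax=b$ and $ya=b$ are always solvable is a group) yields an identity $1$, the set $\{n\cdot 1:n\ge 1\}$ is closed under both operations since $(m\cdot 1)(n\cdot 1)=(mn)\cdot 1$ and hence equals $X$, and invertibility of $1+1$ forces $k\cdot 1=1$ with $k\ge 2$, giving finiteness (or directly the additive idempotent $(k-1)\cdot 1$). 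Your closing diagnosis is also the right one: with only left distributivity one gets $aX=X$ but not $Xa=X$, the multiplicative reduct need not be a group, and this is exactly why the paper leaves open the possibility of a minimal countable left semiring.
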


Clause~1 follows from Theorem (consider the discrete topology),
while the proof of Clause~2 uses different arguments.
It remains to exclude a~possibility of a~minimal {\it countable\/} left semiring.

\newpage

{\it An application.\/}\quad 
We use the theorem to partially answer a~long-standing question
on $(\mathbb N^*,+,\cdot)$, 
the algebra of non-principal ultrafilters over~$\mathbb N$
with their (extended) addition and multiplication:

\noindent 
\begin{q}\quad
Can non-principal ultrafilters over~$\mathbb N$ 
be instances of left or right distributivity?
i.e. can some $u,v,w\in\mathbb N^*$ satisfy
$u(v+w)=uv+uw$ or $(u+v)w=uw+vw$?
\end{q}

E.~van Douwen proved that such instances, even if exist, 
are topologically rare (see~[3]):

\noindent
\begin{tm}[van Douwen]\quad
The sets
$$
\{u\in\mathbb N^*:
\forall v,w\in\mathbb N^*\; u(v+w)=uv+uw\}
$$
and 
$$
\{u\in\mathbb N^*:
\forall v,w\in\mathbb N^*\; (u+v)w=uw+vw\}
$$
are nowhere dense in~$\mathbb N^*$.
\end{tm}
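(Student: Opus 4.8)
Write $O_A=\{u\in\mathbb N^*:A\in u\}$ for $A\subseteq\mathbb N$; these clopen sets form a base of $\mathbb N^*$ and $O_A\ne\emptyset$ exactly when $A$ is infinite. A subset of a space is nowhere dense precisely when every nonempty open set has a nonempty open subset missing it, and an open set missing a set misses its closure as well; so, writing $D$ for the left-hand set, it suffices to prove the reduction \emph{for every infinite $A$ there are an infinite $B\subseteq A$ and $v,w\in\mathbb N^*$ with $u(v+w)\ne uv+uw$ for all $u\in O_B$} (and the dual reduction for the right-hand set). The witness $O_B\subseteq O_A$ then shows $O_A\not\subseteq\overline D$.

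Unwinding the nested-quantifier definition reveals what to aim at. A typical element of $uv$ is a product $a\cdot b$ with $a$ drawn as the outer argument from $v$ and $b$ drawn as the inner argument from $u$; hence a typical element of $u(v+w)$ is $(c+d)\,b=cb+db$ with $c$ from $w$, $d$ from $v$ and a \emph{single} $b$ from $u$, while a typical element of $uv+uw$ is $a_2b_2+a_1b_1$ with $a_2$ from $w$, $a_1$ from $v$ and \emph{two independent} draws $b_1,b_2$ from $u$. As $u$ is non-principal, $b_1\ne b_2$ for almost all pairs, so the two ultrafilters differ in that $u(v+w)$ carries a common left factor across the additive split whereas $uv+uw$ does not. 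Crucially, this difference is invisible to periodic sets: for each modulus $m$ the map $\mathbb N\to\mathbb Z/m\mathbb Z$ is a homomorphism for both operations, extends continuously to $\scc\mathbb N$, and, $\mathbb Z/m\mathbb Z$ being a ring, carries $u(v+w)$ and $uv+uw$ to the \emph{same} residue. Thus no single divisibility condition can separate them, and the separating set must be aperiodic.

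The device I would use is a private prime attached to the common factor. For the right-hand set this is clean, because there the doubled factor is the \emph{free} witness: the mechanism gives $(u+v)w=ac+ae$ with a single $w$-draw $a$, against $uw+vw=a_2f_2+a_1f_1$ with two $w$-draws. Take $w$ supported on a set of distinct primes $\{q_k\}$ and put $S=\{n:\exists k\ q_k\mid n\}$, an infinite union of residue classes. Then $S\in(u+v)w$ holds robustly, since every element is divisible by its own prime $w$-draw; and $S\notin uw+vw$ amounts to showing that almost every $a_2f_2+a_1f_1$ (with $a_1\ne a_2$) is divisible by no $q_k$. The primes of $a_1$ and $a_2$ each divide only one summand and so miss the sum, while the remaining accidental divisibilities are governed, via the homomorphism remark, by the single residue $(\pi_{q}(u)+\pi_{q}(v))\pi_{q}(w)\in\mathbb Z/q\mathbb Z$; since only countably many primes occur and the reduction permits $v$ to depend on $u$, a genericity construction choosing $v$ so that all these residues are nonzero finishes the right-hand case.

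The main obstacle is the left-hand set, where the doubled factor is the \emph{constrained} variable $u$, so the marker must sit on $u$'s support $B\subseteq A$ rather than on a factor of our choosing. When $A$ contains infinitely many elements possessing a private prime the same construction applies verbatim, tagging $B$ and supporting $v,w$ on powers of two. The difficulty is purely the multiplicatively thin case — for instance $A$ a set of powers of a fixed prime — where no private primes are available; here I would instead encode the common factor positionally, choosing $B\subseteq A$ so lacunary that in the relevant products each inner draw occupies its own, uniquely decodable, block of digits, so that ``the two blocks coincide'' is an aperiodic set separating the two ultrafilters. Reconciling these two regimes into one argument valid for every infinite $A$ is the technical heart of the proof; granted it, nowhere density of both sets follows from the reduction above.
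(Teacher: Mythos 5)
You should know at the outset that the paper does not prove this statement at all: it is van Douwen's theorem, quoted from reference [3] as background for the application, so there is no proof in the paper to compare yours against. Judged on its own terms, your text is a plan rather than a proof, and you say so yourself (``the technical heart'' is left unresolved). What is right: the reduction of nowhere density to ``every nonempty basic clopen $O_A$, $A$ infinite, contains a nonempty $O_B$ disjoint from the set'' is correct; your unwinding of the iterated limits in $u(v+w)$, $uv+uw$, $(u+v)w$, $uw+vw$ agrees with the paper's convention $uv=\{S:\{a:\{b:ab\in S\}\in u\}\in v\}$ (inner draw from the first-named ultrafilter, outer from the second); and the observation that reduction mod $m$ is a homomorphism into a ring, so that no periodic set can separate the two sides, is a genuine and correct structural insight.

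The gaps are concrete. (1) In the right-hand case, $S=\bigcup_k q_k\mathbb N$ does lie in $(u+v)w$ robustly, but the companion claim $S\notin uw+vw$ asks that $S^{c}$ --- the conjunction of infinitely many conditions ``$q_k\nmid n$'' --- belong to the ultrafilter $uw+vw$. Ultrafilters are only finitely additive, so arranging each residue $(\pi_{q_k}(u)+\pi_{q_k}(v))\pi_{q_k}(w)\ne 0$ separately does not yield membership of the infinite intersection; moreover, in both $(u+v)w$ and $uw+vw$ the draw from the constrained ultrafilter $u$ occupies the innermost position of the iterated limit, so the quantifier ``for all $u\in O_B$'' forces the innermost condition to hold for all but finitely many elements of $B$ uniformly against infinitely many congruence classes determined by the outer draws. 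Your ``genericity construction'' would have to deliver exactly this and is not described; it is where the actual work lies. (2) For the left-hand set you explicitly leave open the multiplicatively thin case (e.g.\ $A$ a set of powers of one prime), and the proposed lacunary ``digit-block'' encoding is not shown to produce a set lying in one of $u(v+w)$, $uv+uw$ but not the other for every $u\in O_B$. So the proposal isolates a plausible mechanism but does not prove van Douwen's theorem; since the paper only cites [3], you would need to carry out or consult that argument rather than infer anything from the paper itself.
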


We produce a~negative result in another direction:

\newpage

A~well-known (and difficult) result says that the algebra 
$(\mathbb N^*,+,\cdot)$ has no common idempotents.
It follows

\noindent
\begin{cor}\quad
No closed subalgebras of $(\mathbb N^*,+,\cdot)$
satisfy the left distributivity law.
\end{cor}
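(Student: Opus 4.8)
The plan is to argue by contradiction, reducing the statement directly to the main Theorem together with the quoted (difficult) fact that $(\mathbb N^*,+,\cdot)$ carries no common idempotent. Suppose, toward a contradiction, that some closed subalgebra $A\subseteq\mathbb N^*$ satisfies the left distributivity law. The goal is to check that $A$, equipped with the restrictions of $+$ and $\cdot$, is a compact Hausdorff left topological left semiring, so that the Theorem applies and furnishes a common idempotent living inside $\mathbb N^*$.

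First I would verify compactness. Since each principal ultrafilter is an isolated point of $\scc\mathbb N$, the set $\mathbb N$ is open, hence its complement $\mathbb N^*$ is closed in the compact Hausdorff space $\scc\mathbb N$; thus $\mathbb N^*$ is itself compact Hausdorff. As $A$ is closed in $\mathbb N^*$, it is a closed subset of a compact space, hence compact, and Hausdorff as a subspace. Next I would check the algebraic and topological hypotheses. Because $(\mathbb N,+,\cdot)$ is a semiring, both $(\mathbb N,+)$ and $(\mathbb N,\cdot)$ are semigroups, so by the Lemma on the stability of associativity their extensions $(\scc\mathbb N,+)$ and $(\scc\mathbb N,\cdot)$ are semigroups as well; restricting to the subalgebra $A$ preserves associativity. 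The two extended operations are left topological on $\scc\mathbb N$, and left continuity of each left translation is inherited by the subspace $A$ (for fixed $a\in A$ the map $x\mapsto a+x$, and likewise $x\mapsto a\cdot x$, is continuous on $\scc\mathbb N$, hence on $A$, and sends $A$ into $A$). Combined with the assumed left distributivity, this exhibits $(A,+,\cdot)$ as a compact Hausdorff left topological left semiring.

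Finally, applying the Theorem to $A$ yields a common idempotent $u\in A$. Since $A\subseteq\mathbb N^*$, this $u$ is a common idempotent of $(\mathbb N^*,+,\cdot)$, contradicting the quoted result; hence no such $A$ can exist. The single point requiring care is the tacit convention that a subalgebra is nonempty --- otherwise the empty set would vacuously satisfy left distributivity, and the Theorem (which asserts the \emph{existence} of an idempotent, and so presupposes a nonempty carrier) would not apply. Under the paper's reading of a subalgebra as the closure of a nonempty subset under the operations, this bookkeeping is automatic. I expect that nonemptiness check --- rather than any substantive difficulty --- to be the only subtle point, since the whole weight of the argument rests on the already-established Theorem and the cited nonexistence of common idempotents in $(\mathbb N^*,+,\cdot)$.
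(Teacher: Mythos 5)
Your argument is correct and is exactly the route the paper intends: the paper derives the corollary in one line from the main Theorem plus the quoted fact that $(\mathbb N^*,+,\cdot)$ has no common idempotents, and your verification that a closed subalgebra of $\mathbb N^*$ satisfying left distributivity is a compact Hausdorff left topological left semiring fills in precisely the implicit steps. The nonemptiness caveat you flag is the right (and only) piece of bookkeeping.
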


In this in mind, we may specify the question as follows:

\noindent 
\begin{q}\quad
Can some {\it non-closed\/} subalgebra of 
$(\mathbb N^*,+,\cdot)$ be a~left semiring?
\end{q}

\noindent 
\begin{rmk}\quad
In fact, $\mathbb N^*$~does not have even $u$ with $u+u=uu$. 
And it is open if some $u,v,w,x\in\mathbb N^*$ satisfy $u+v=wx$ (or other linear equations).
On the other hand, the closure of the set of additive idempotents forms 
a~right ideal of~$(\mathbb N^*,\cdot)$, so (by Ellis' result) 
there must be multiplicative idempotents {\it close\/} to additive ones.
This allows to refine Hindman's theorem:
{\it For any finite partition of~$\mathbb N$
there is a~part containing all finite sums of an infinite sequence 
as well as all finite products of another one.\/} 
(This refinement was proved by Hindman and Bergelson, see~[2].)
\end{rmk}

\newpage
{\centerline{\Large{\bf Generalizations:\/}}}
\noindent
{\centerline{\Large{\bf Non-associative case\/}}}

\newpage

Here we note that the used arguments work in more general situations,
when the operations are not assumed associative 
but satisfy certain other, much weaker conditions.

For simplicity, let me consider only the case with one operation.
Results for algebras with two (and in fact, with any number of) operations
can be established along the same line as this was done above for left semirings
(see~[4]).

\newpage

{\it Terminology.\/}\quad
An occurence of a~variable~$x$ into 
a~term $t(x,\ldots)$ is {\it right-most\/}
iff 
the occurence is non-dummy and
whenever
$$t_1(x,\ldots)\cdot t_2(x,\ldots)$$
is a~subterm of~$t$, 
then $x$~occurs non-dummy into 
$t_2(x,\ldots)$ but not~$t_1(x,\ldots)$.

\noindent
\begin{exms}\quad
All the occurences of the variable~$x$ into the terms
$$
x,\;\;
vx,\;\;
(vv)x,\;\;
v(vx),\;\;
(uv)(wx)
$$
are right-most,
while all its occurences into the terms
$$
v,\;\;
uv,\;\;
xv,\;\;
xx,\;\;
x(vx)\;\;
(ux)(vx)
$$
are not.
\end{exms}

A~{\it left-most\/} occurence can be defined dually
but is not used in what follows.
Note that if the occurence of~$x$ into~$t$ is right-most 
(or left-most), then $x$~occurs into~$t$ just one time.

\newpage

\begin{lm}\quad
Let $X$~be a~left topological groupoid,
and let $t(v,\ldots,x)$ be a~term
with the right-most occurence of the last argument.
Then for every $a,\ldots\in X$,
the mapping
$$x\mapsto t(a,\ldots,x)$$
is continuous.
\end{lm}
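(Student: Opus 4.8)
The plan is to proceed by induction on the structure of the term $t$. The definition of a right-most occurrence is tailored precisely to this induction: it guarantees that at every node of the syntax tree the last variable stays inside the right factor, so that reconstructing $t$ from its immediate subterms never calls for anything but a left translation applied to a continuously varying argument.

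For the base case, the only term in which $x$ occurs right-most without being a product is the variable $x$ itself, and $x\mapsto t(a,\ldots,x)=x$ is the identity, hence continuous. For the inductive step, write $t=t_1\cdot t_2$ with the outermost multiplication displayed. By the definition of a right-most occurrence, $x$ occurs non-dummy in $t_2$ but not in $t_1$, and its occurrence in $t_2$ is again right-most. Fixing the remaining arguments $a,\ldots$, the subterm $t_1$ thus evaluates to a single element $c=t_1(a,\ldots)\in X$ that does not depend on~$x$, while the induction hypothesis makes $x\mapsto t_2(a,\ldots,x)$ continuous. Consequently
$$x\mapsto t(a,\ldots,x)=c\cdot t_2(a,\ldots,x)$$
is the composition of this continuous map with the left translation $y\mapsto c\,y$, which is continuous since $X$ is left topological; as a composition of continuous maps is continuous, this finishes the step.

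I expect no serious obstacle: the whole content sits in the definition, and the left-topological hypothesis supplies exactly the continuity of the single multiplication introduced at each level of the recursion. The only point demanding care is the bookkeeping---verifying that the right-most condition is inherited by $t_2$ while forcing $t_1$ to be free of~$x$---so that each step really does collapse to one left translation and no right translation (which need not be continuous) is ever required.
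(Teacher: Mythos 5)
Your proof is correct and is exactly the induction on the construction of $t$ that the paper invokes (the paper's proof consists of the single line ``By induction on construction of~$t$''); your base case and the decomposition $t=t_1\cdot t_2$ with $x$ confined to $t_2$ fill in precisely the intended details.
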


\begin{proof}
By induction on construction of~$t$.
\end{proof}

Now we are ready to formulate our non-associative version of Ellis' result:

\newpage

\begin{tm}\quad
Let $X$~be a~compact left topological groupoid,
and $r$, $s$, and $t$~some one-, two-, and three-parameter terms respectively,
where $s$~has the right-most occurence of the last argument.
Suppose that $X$~satisfies
$$
s(x,y)=s(x,z)=r(x)
\;\to\;
s(x,y\cdot z)=r(x)
$$
and
$$
s(x,y)\cdot 
s(x,z)
=
s(x,t(x,y,z)).
$$
Then $X$~has an idempotent.
\end{tm}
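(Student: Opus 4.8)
The plan is to mimic the two–part structure of the proof of the main theorem, replacing the additive/multiplicative idempotent machinery with the abstract hypotheses on the terms $r$, $s$, and $t$. First I would reduce to the minimal compact case: the intersection of a decreasing chain of compact subgroupoids is a compact subgroupoid, so by Zorn's Lemma $X$ has a minimal compact subgroupoid, and it suffices to prove that any minimal compact $X$ of this kind consists of a single element (hence carries an idempotent). So assume $X$ is minimal compact.

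Next I would fix $a\in X$ and consider the set $aX=\{a\cdot x:x\in X\}$, which is compact as the continuous image of $X$ and forms a subgroupoid; by minimality $aX=X$. The point of the two-parameter term $s$ with its right-most occurrence of the last argument is that, by the preceding Lemma, the map $x\mapsto s(a,x)$ is continuous, so I would look at the set
$$
A=\{x\in X: s(a,x)=r(a)\}.
$$
This $A$ is the preimage of the point $r(a)$ under a continuous map, hence compact, and it is nonempty provided $r(a)\in s(a,X)$; I would arrange this by first passing to a suitable $a$ (the element whose ``value'' $r(a)$ is attained), using $aX=X$ to hit the required value. The first displayed hypothesis is exactly what makes $A$ closed under the operation: if $y,z\in A$ then $s(a,y)=s(a,z)=r(a)$, so $s(a,y\cdot z)=r(a)$, i.e.\ $y\cdot z\in A$. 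Thus $A$ is a compact subgroupoid, and minimality forces $A=X$, so $s(a,x)=r(a)$ for every $x\in X$.

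The role of the second hypothesis and of the three-parameter term $t$ is to convert this uniform behaviour of $s$ into an actual idempotent. Once $s(a,x)=r(a)$ for all $x$, the identity $s(a,y)\cdot s(a,z)=s(a,t(a,y,z))$ collapses: the left side is $r(a)\cdot r(a)$ while the right side is again $r(a)$ (since $s(a,w)=r(a)$ for every $w$, in particular $w=t(a,y,z)$). This yields $r(a)\cdot r(a)=r(a)$, so $r(a)$ is an idempotent, and by minimality $X=\{r(a)\}$. The main obstacle I anticipate is the nonemptiness/attainment step for $A$: I must ensure the target value $r(a)$ genuinely lies in the range $s(a,X)$, since $r$ and $s$ are a priori unrelated; getting this right is what forces the careful choice of the base point $a$ via $aX=X$, and all the associativity-free work is concentrated there rather than in the two formal hypotheses, which are designed precisely to make the closure and idempotence computations go through mechanically.
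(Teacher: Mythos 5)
Your overall architecture (Zorn's Lemma to pass to a minimal compact subgroupoid, which inherits the two universal hypotheses, and then showing such a subgroupoid is a singleton) matches the paper's sketch, and your endgame is right: once $s(a,x)=r(a)$ for all $x$, the identity $s(a,y)\cdot s(a,z)=s(a,t(a,y,z))$ collapses to $r(a)\cdot r(a)=r(a)$. But there is a genuine gap exactly at the point you flag as ``the main obstacle'', plus an incorrect step just before it. First, the claim that $aX=\{ax:x\in X\}$ is a subgroupoid is unjustified here: $(ax)(ay)\in aX$ requires associativity (or some substitute for it), and none is assumed; so you cannot conclude $aX=X$, and in fact this set plays no role in the correct argument. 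Second, and more importantly, you never actually establish that $A=\{x\in X:s(a,x)=r(a)\}$ is nonempty; ``passing to a suitable $a$ \dots\ using $aX=X$ to hit the required value'' does not work, because surjectivity of the left translation $x\mapsto ax$ says nothing about the range of the different map $x\mapsto s(a,x)$, and $r$ and $s$ are a priori unrelated, as you yourself observe.

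The missing idea is that the second hypothesis is designed precisely to solve this attainment problem, not merely to finish the idempotence computation. Set $B=s(a,X)$. By the Lemma, $x\mapsto s(a,x)$ is continuous (this is where the right-most occurrence of the last argument of $s$ is used), so $B$ is compact; and the identity $s(a,y)\cdot s(a,z)=s(a,t(a,y,z))$ says exactly that $B$ is closed under the operation. Hence $B$ is a nonempty compact subgroupoid, minimality forces $B=X$, and in particular $r(a)\in s(a,X)$, so $A\neq\emptyset$ for \emph{every} $a$ --- no careful choice of base point is needed. From there your argument goes through: $A$ is compact as the preimage of $\{r(a)\}$ under a continuous map, closed under $\cdot$ by the first hypothesis, hence equal to $X$ by minimality, and the second hypothesis then yields $r(a)\cdot r(a)=r(a)$ and $X=\{r(a)\}$.
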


The proof refines the argument used for semigroups.
First one shows that if such a~groupoid is {\it minimal compact\/}, 
then it consists of a~unique element (the lemma above is necessary here). 
Then one applies Zorn's Lemma 
to isolate a~minimal compact subgroupoid (which satisfies 
the same universal formulas).


The theorem indeed generalizes Ellis' one~--- since
associativity implies the conditions of this theorem,
with
$$
r(x)=x,\quad
s(x,y)=xy,\quad
t(x,y,z)=yxz.
$$
But many other {\it identities\/} imply these conditions as well. 
Let me give several examples.

\newpage

{\it Examples.\/}
1. 
Consider four following identities (``in Moufang style"):
$$
\begin{array}{lccr}
(1)
&\qquad\quad&
(xy)(yz)=((xy)y)z,
&\qquad\qquad
\\
(2)
&\qquad\quad&
(xz)(yz)=((xz)y)z,
&\qquad\qquad
\\
(3)
&\qquad\quad&
(xy)(xz)=x(y(xz)),
&\qquad\qquad
\\
(4)
&\qquad\quad&
(xx)(yz)=((xx)y)z
&\qquad\qquad
\end{array}
$$
Each of them is {\it strictly weaker\/} than associativity.

(1),~as well as~(2), implies the first condition of the theorem,
while (3)~implies the second one, all with $r(x)=x$ and $s(x,y)=xy$ again.
(4)~implies the first condition with $r(x)=x$ and $s(x,y)=(xx)y$.
Therefore, {\it every compact left topological groupoid satisfying 
any of (1), (2), (4), togheter with~(3), has an idempotent.\/}


2. 
The identity 
$$
\:(5)
\qquad\qquad\quad 
x(yz)=(xz)y
\qquad\qquad\qquad\quad\; 
$$ 
is {\it incomparable\/} with associativity but, 
like it, implies both conditions of the theorem, again with $r(x)=x$ and $s(x,y)=xy$.
Hence, {\it every compact left topological groupoid satisfying~(5) has an idempotent.\/}

\newpage

3.
The {\it left self-distributivity\/} 
$$
\,(6)
\qquad\quad\quad 
x(yz)=(xy)(xz)
\qquad\qquad\qquad\quad 
$$ 
appeared in the algebra of elementary embeddings
arising under (very) large cardinal hypotheses (R.~Laver). 
Also, P.~Dehornoy found its deep connection with infinite braid groups.
(See~[5].)
It is not hard to check that the conjunction of~(6) with the identity 
$$
\,(7)
\qquad\qquad\quad 
x(xx)=(xx)x
\qquad\qquad\qquad\quad 
$$ 
implies both conditions of the theorem, 
with \\$r(x)=x(xx)$, $s(x,y)=xy$, $t(x,y,z)=yz$. 
Hence, {\it any compact left topological groupoid satisfying 
(6) and~(7) has an idempotent.\/}

(The identity~(7) is necessary; left self-distributivity alone implies
only {\it finiteness\/} of minimal groupoids.)

\newpage

Our non-associative theorem can be interesting from the following point:

\noindent
\begin{q}\quad
Which formulas (or, at least, which identities) are stable
under passing to the algebra of ultrafilters?
\end{q}

\noindent
\begin{cnj}\quad
Identities that follows from associativity are stable
under passing to the algebra of ultrafilters.
\end{cnj}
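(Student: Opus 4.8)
The plan is to reduce everything to a single \emph{unfolding lemma} describing how a term is evaluated in $\scc X$, and then to exploit that ``$s=t$ follows from associativity'' means precisely that $s$ and $t$, read with parentheses erased, give the same sequence of variable occurrences. Writing an identity as $s(x_1,\ldots,x_n)=t(x_1,\ldots,x_n)$, I would first show that for every term $r$ whose leaves, read left to right, carry the variables $x_{j_1},\ldots,x_{j_m}$, and for all $u_1,\ldots,u_n\in\scc X$,
$$
S\in r(u_1,\ldots,u_n)\iff\{a_1:\cdots\{a_m:\hat r(a_1,\ldots,a_m)\in S\}\in u_{j_m}\cdots\}\in u_{j_1},
$$
where $\hat r$ is $r$ with its $m$ leaves replaced by the fresh, pairwise distinct symbols $a_1,\ldots,a_m$. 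This is proved by induction on the construction of $r$, the one-operation step being exactly the definition of the extended product; no topology is needed, the statement being a purely combinatorial consequence of iterating that definition.

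The decisive observation is that the nested string of ultrafilter quantifiers on the right depends only on the \emph{word} $x_{j_1}\cdots x_{j_m}$ and on the assignment of ultrafilters to its positions, and \emph{not} on the bracketing of $r$. Hence, if $s=t$ follows from associativity, the unfoldings of $s(u_1,\ldots,u_n)$ and $t(u_1,\ldots,u_n)$ carry literally the same quantifier prefix, and only the inner conditions ``$\hat s(a_1,\ldots,a_m)\in S$'' and ``$\hat t(a_1,\ldots,a_m)\in S$'' can differ. When every variable occurs just once (the identity is \emph{multilinear}), $\hat s$ and $\hat t$ are $s$ and $t$ themselves, so $X\models s=t$ makes the two inner conditions coincide pointwise on $X$; the two unfoldings become the same formula, whence $s(u_1,\ldots,u_n)=t(u_1,\ldots,u_n)$ and $\scc X\models s=t$. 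Thus every multilinear consequence of associativity is stable, and this already subsumes the Lemma that associativity itself is stable.

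The hard part will be \emph{repeated} variables. Unfolding assigns a distinct bound element $a_i$ to each leaf, so $\hat s$ and $\hat t$ are the \emph{linearisations} of $s$ and $t$, in which formerly equal variables are pulled apart; the two inner conditions agree on $X$ only if $X$ satisfies the linearised identity $\hat s=\hat t$. But $X\models s=t$ does \emph{not} in general yield $X\models\hat s=\hat t$: for example $(xx)y=x(xy)$ linearises to $(a_1a_2)a_3=a_1(a_2a_3)$, i.e. to full associativity, which a groupoid may well fail while still satisfying the original identity. The same gap reappears in the alternative, topological attempt to run a density argument one variable at a time (pushing an argument into the dense set $X$ by the continuity Lemma above): one cannot push the two occurrences of a repeated variable independently, and the diagonal map $u\mapsto uu$ is not continuous, so the reduction to $X$ stalls exactly where associativity was previously invoked.

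I would therefore expect the genuine content of the conjecture to lie entirely in the non-multilinear identities, and I would attack it by trying to recover the lost correlation between the repeated occurrences: since all occurrences of a repeated variable are governed by one and the same ultrafilter $u$, one may hope to replace the independent choices $a_i$ by a single sufficiently generic choice, or to use an idempotent $u$ to force the needed coincidences among the $a_i$. Whether such a device can close the gap for every associativity-consequence, or whether instead some identity with repeated variables is in fact \emph{not} stable and thereby refutes the conjecture, is in my view the crux of the matter.
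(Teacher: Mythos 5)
The statement you are addressing is not proved in the paper at all: it is explicitly labelled a \emph{Conjecture} and left open (the author only asserts, without argument, that particular identities such as (1)--(4) are stable). So there is no proof in the paper to compare yours against, and your attempt does not settle the question either --- as you yourself concede in your final sentence. What you do establish is the multilinear special case, and that part is sound: an identity $s=t$ follows from associativity exactly when $s$ and $t$ carry the same left-to-right sequence of variable occurrences (compare them in the free semigroup); the iterated unfolding of a term in $\scc X$ produces a nest of ultrafilter quantifiers depending only on that sequence and on which ultrafilter sits at each position, not on the bracketing; and when every variable occurs once, the inner conditions $\hat s(\vec a)\in S$ and $\hat t(\vec a)\in S$ coincide pointwise because $X\models s=t$. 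This indeed subsumes the paper's Lemma that associativity itself passes to $\scc X$. (Do check that the order of nesting in your unfolding matches the paper's left topological convention for $uv$, which differs from the usual right topological one; the combinatorial content of your observation is unaffected.)

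The genuine gap is exactly where you locate it, and it is not closed by your proposal. For an identity with a repeated variable, say $(xx)y=x(xy)$, the unfolding replaces the two occurrences of $x$ by independent bound elements $a_1,a_2$ ranging over $u$-large subsets of $X$, so the two inner conditions agree pointwise only if $X$ satisfies the \emph{linearised} identity $(a_1a_2)a_3=a_1(a_2a_3)$ --- full associativity --- which need not follow from $X\models (xx)y=x(xy)$. Your closing suggestions (a single generic choice of the $a_i$, or an idempotent $u$ forcing coincidences) are not yet arguments: the product $u\cdot u$ of a non-principal ultrafilter with itself never concentrates on the diagonal, so the correlation between the two occurrences really is destroyed by the extension, and nothing in the unfolding lets you restrict attention to tuples with $a_1=a_2$. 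As it stands, your proposal is a correct partial result (multilinear consequences of associativity are stable) together with an accurate diagnosis of the obstruction; it neither proves nor refutes the conjecture, which remains open in the paper as well.
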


E.g. so are the identities (1)--(4) above.

{\it Observation.\/}\quad 
If some identities, on the one hand, imply the conditions of the theorem,
and, on the other hand, are stable under passing to ultrafilters, 
then the technique based on ultrafilters and developed for semigroups
can be extended to groupoids satisfying these identities.
E.g. one can prove for such groupoids (appropriate analogs of) 
Hindman's theorem, van der Waerden's theorem, etc.

\newpage

\newpage

{\centerline{\Large{\bf References\/}}}

\newpage

[1]\; 
Ellis~R.\;
{\it Lectures on topological dynamics.\/}\;
\\ Benjamin, NY, 1969.

[2]\; 
Hindman~N.,\; Strauss~D.\;\;
{\it Algebra\, in\, the \\ Stone--\v{C}ech\, compactification.\/}\,
W.~de~Gruyter, Berlin--NY, 1998.

[3]\; 
van~Douwen~E.\; 
{\it The Stone--\v{C}ech compactification 
of a~discrete groupoid.\/}\;
Topol.\/ Appl., 39~(1991), 43--60.

[4]\; 
Saveliev~D.~I.\;
{\it Idempotents in compact left topological algebras 
with binary operations.\/}\;
To appear.

[5]\; 
Dehornoy~P.\;
{\it Elementary embeddings and \\ algebra.\/}
In: Foreman~M.,\; Kanamori~A. (eds.),
\\ {\it Handbook of Set Theory\/}.
To appear.

\end{document}